\newtheorem{ques}{Question}
\newtheorem{dfn}{Definition}
\newtheorem{fct}{Fact}
\newtheorem{exl}{Example}
\newtheorem{prpsn}{Proposition}
\newtheorem{thrm}{Theorem}
\newtheorem{crlr}{Corollary}
\newtheorem{lmm}{Lemma}
\theoremstyle{plain}
\theoremstyle{definition}
\renewcommand{\iff}{\Leftrightarrow}
\renewcommand{\phi}{\varphi}
\author{Alex Gavryushkin}
\begin{document}

\maketit

\begin{center} Department of Computer Science, The University of Auckland,\\
Private Bag 92019, Auckland, New Zealand\\
email: a.gavruskin@auckland.ac.nz \end{center}

\abstract{Many counterexamples are known in the class of small theories due to
Goncharov~\cite{Goncharov} and Millar~\cite{Millar}. The prime model of a decidable small theory is not 
necessarily decidable. The saturated model of a hereditarily decidable small theory
is not necessarily decidable. A homogeneous model with uniformly decidable type
spectra is not necessarily decidable. In this paper, I consider the questions of 
what model theoretic properties are sufficient for the existence of such 
counterexamples. I introduce a subclass of the class of small theories,
which I call AL theories, show the absence of Goncharov-Millar counterexamples
in this class, and isolate a model theoretic property that implies the
existence of such anomalies among computable models.} \notes{0}{

\subclass{03C57, 03D75}%
\keywords{theories, types, decidable theories, AL theories, computable and decidable
models}%
\thank{This work was partially funded by the subsidy allocated to Kazan
Federal University for the state assignment in the sphere of
scientific activities}}


I consider only countable structures of countable languages. And only {\em small} theories.

\begin{dfn}
\textup{A first-order theory $T$ is {\em small} if the set of finite first-order types of $T$ without parameters, $S(T)$, is countable.
}
\end{dfn}

Throughout the paper all theories are assumed to be small. 

\begin{fct}  Every small theory has a prime model and a saturated model. 
These models are unique up to isomorphism. 
\end{fct}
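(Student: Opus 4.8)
The plan is to handle the prime model and the saturated model separately, in each case establishing existence first and then uniqueness by back-and-forth. Throughout I would identify the prime model with the countable atomic model and the saturated model with the countable $\omega$-saturated model, so that the two halves of the Fact become statements about atomic and $\omega$-saturated models of a countable complete theory.

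For the prime model, I would first reduce existence to the density of isolated types: a countable complete theory has a (necessarily countable) prime model iff it has an atomic model, and by Vaught's criterion it has an atomic model iff for every $n$ the isolated types are dense in the Stone space $S_n(T)$. The crucial step is to extract this density from smallness, and I expect it to be the main obstacle, since the rest of the argument is routine. Here I would use the topology of $S_n(T)$, a compact, Hausdorff, totally disconnected space, together with the fact that smallness makes each $S_n(T)$ countable. For any formula $\phi(\bar x)$ consistent with $T$, the clopen set $[\phi]$ is again compact Hausdorff and countable; since a nonempty perfect compact Hausdorff space has cardinality at least $2^{\aleph_0}$ (equivalently, by the Baire category theorem a countable compact Hausdorff space cannot be perfect), $[\phi]$ has an isolated point, which is an isolated type containing $\phi$. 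Density then gives an atomic model via the usual Henkin construction omitting every non-isolated type.

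For the saturated model, the first step is a lemma that smallness propagates to parameters: if each $S_n(\emptyset)$ is countable, then $S_m(\bar a)$ is countable for every finite tuple $\bar a$. I would prove this by fixing $q = \mathrm{tp}(\bar a)\in S_n(\emptyset)$ and injecting $S_m(\bar a)$ into the set of $(n+m)$-types over $\emptyset$ extending $q$, via $\psi(\bar y,\bar a)\mapsto \psi(\bar y,\bar x)$; the target sits inside the countable set $S_{n+m}(\emptyset)$. Granting this, I would build a countable $\omega$-saturated model as the union of an elementary chain $M_0\preceq M_1\preceq\cdots$, where $M_0\models T$ is countable and each $M_{i+1}$ realizes all of the countably many types over finite subsets of $M_i$ (countably many finite subsets, each contributing countably many types). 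Since for countable models $\omega$-saturation coincides with saturation, $M=\bigcup_i M_i$ is the desired countable saturated model.

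Finally, uniqueness in both cases is a standard back-and-forth. Two countable atomic models are isomorphic because atomicity lets one extend any finite partial elementary map by one element at a time; two countable saturated models are isomorphic because saturation lets one realize, on each side, the type of the next element over the finite set already matched. I do not anticipate difficulty here: the content of the Fact is concentrated in the two existence statements, and in particular in the countable-Stone-space density argument, which is where I would invest the care.
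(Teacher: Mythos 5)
Your proof is correct and is the standard argument: the paper states this Fact without proof, treating it as the classical result of Vaught, and your route --- prime model via density of isolated types in the countable Stone spaces $S_n(T)$ (a countable compact Hausdorff space has no nonempty perfect closed subspace, so every clopen $[\phi]$ contains an isolated point), saturated model via the elementary-chain construction after propagating smallness to types over finite parameter tuples, and uniqueness by back-and-forth in both cases --- is exactly the canonical proof one would cite. The only point worth making explicit is that, in line with the paper's conventions, $T$ is tacitly assumed complete and in a countable language (uniqueness of the prime model fails for incomplete theories); with that understanding, every step you give, including the injection $S_m(\bar a)\hookrightarrow S_{n+m}(\emptyset)$ and the identification of countable saturated with countable $\omega$-saturated models, is sound.
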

\begin{fct}\label{1} If $p$ is a type of a small theory $T$ 
and $A\models p(\bar a)$ then the theory 
$Th(A,\bar a)$ has a prime model $(A_{\bar a},\bar c)$. The isomorphism 
type of the structure $A_{\bar a}$ does not depend on the choice of $A$
and $\bar a$, it only depends on the type $p$. Since \textbf{we consider 
structures up to isomorphism}, denote the structure by $A_{p}$.
\end{fct}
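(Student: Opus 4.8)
The plan is to derive both claims from the preceding Fact, which already provides a unique prime model for every small theory; thus it suffices to (i) verify that the expanded theory $Th(A,\bar a)$ is small, and (ii) verify that, as a complete theory in the enlarged language, $Th(A,\bar a)$ depends only on $p$.

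For (i), I would argue that naming finitely many elements cannot destroy smallness. Work in the language $L\cup\{\bar c\}$, where the constants $\bar c$ name $\bar a$. A parameter-free $k$-type $q(\bar x)$ of $Th(A,\bar a)$ becomes, after replacing each constant $c_i$ by a fresh variable $y_i$, a set of $L$-formulas in $\bar x\bar y$; adjoined to $p(\bar y)$ this determines a complete $(k+|\bar a|)$-type of $T$. The resulting map from types of $Th(A,\bar a)$ to types of $T$ is injective, so $|S(Th(A,\bar a))|\le|S(T)|=\aleph_0$. By the preceding Fact, $Th(A,\bar a)$ then has a prime model $(A_{\bar a},\bar c)$, unique up to isomorphism, which settles existence.

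For (ii), I would note that every sentence of $L\cup\{\bar c\}$ has the form $\psi(\bar c)$ for some $L$-formula $\psi(\bar x)$, and that $(A,\bar a)\models\psi(\bar c)$ iff $A\models\psi(\bar a)$ iff $\psi\in p$, since $p$ is the complete type of $\bar a$. Hence the membership of each sentence in $Th(A,\bar a)$ is decided by $p$ alone, so for any second witness $B\models p(\bar b)$ we have the literal equality $Th(A,\bar a)=Th(B,\bar b)$. Equal theories share the same prime model up to isomorphism, giving $(A_{\bar a},\bar c)\cong(A_{\bar b},\bar c)$ and in particular $A_{\bar a}\cong A_{\bar b}$ as $L$-structures.

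The argument is largely bookkeeping, and no single step is genuinely hard. The point that most deserves care is the injectivity and counting in step (i): one must be sure that the passage from a type of the expanded theory to a type of $T$ is well defined and one-to-one, so that the finite addition of constants provably keeps the type space countable. Everything else follows formally from the uniqueness of prime models asserted in the preceding Fact.
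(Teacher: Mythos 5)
Your proof is correct and complete; the paper states this as a Fact without proof, and your two-step verification is precisely the standard argument: (i) the substitution map $q(\bar x)\mapsto\{\phi(\bar x,\bar y):\phi(\bar x,\bar c)\in q\}$ is well defined and injective into $S(T)$, so $Th(A,\bar a)$ is small and the preceding Fact yields a prime model unique up to isomorphism, and (ii) since $p$ is a complete type we have $tp^{A}(\bar a)=p$, hence $Th(A,\bar a)=\{\psi(\bar c):\psi\in p\}$ literally, so any two witnesses give the same expanded theory and therefore isomorphic prime models, whose $L$-reducts are isomorphic. One cosmetic remark: adjoining $p(\bar y)$ in step (i) is redundant, since the substituted set already contains $p(\bar y)$ (each $\psi\in p$ gives $\psi(\bar c)\in Th(A,\bar a)\subseteq q$), and it is worth saying explicitly that the image set is a \emph{complete} type of $T$ because $q$ decides every $L(\bar c)$-formula $\phi(\bar x,\bar c)$ and is realised in a model of $T$.
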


\begin{dfn}
\textup{Call the structure $A_{p}$ from Fact \ref{1} {\em $p$-prime}, or {\em almost prime} if the type is not specified.
}
\end{dfn}

The set $\mathcal{AP}_{T}$ of all almost prime models of a theory $T$ is preordered 
by the relation $\preceq$ of elementary embeddability. The preorder induces 
a partial order on the factor-set 
$\mathcal{AP}_{T}/\sim$, where $A\sim B\iff (A\preceq B~\&~B\preceq A)$, in the
natural way. Note that
$(\mathcal{AP}_{T}/\sim,\preceq)$ has a unique least element---the prime model of $T$.

\begin{dfn}
\textup{We call the partial order $(\mathcal{AP}_{T}/\sim,\preceq)$ the {\em fundamental order} of the theory $T$.
}
\end{dfn}

The notion of fundamental order is also known as Rudin--Keisler order~\cite{Sud, Sud2}. In this paper, I follow the terminology of Lascar--Poizat~\cite{lp} and Baldwin--Berman~\cite{bb}, who introduced and studied fundamental orders of theories in general. 

To illustrate the definition, let us consider the following 
simple though important examples of fundamental orders. 

\begin{exl}\label{exlALT}
The following theories $T$ are AL theories in the sense of Definition~\ref{dfnALT} below. 
\begin{enumerate}
\item A saturated structure is almost prime if and only if the structure is $\aleph_{0}$-categorical. For the theory $T$ of such a structure, $\mathcal{AP}_{T}$ is the
one-element partial ordering.  
\item If a theory $T$ is $\aleph_{1}$- but not $\aleph_{0}$-categorical then $\mathcal{AP}_{T}\cong\omega$.
\item If a complete theory $T$ has finitely many countable models then $\mathcal{AP}_{T}$ has a greatest element.
\end{enumerate}
\end{exl}
\begin{proof}
(1) is obvious. For (2) we note that the saturated model of such a theory is not almost prime
and the rest of models are almost prime. 

(3) A type $p$ of a theory $T$ is called powerful if every model of $T$ realising $p$ realises
every type of $T$ as well. If $T$ is a complete theory with finitely many, but more than one, 
countable models then it has a non-principal powerful type $p$.  A $p$-prime structure gives 
the greatest element of $\mathcal{AP}_{T}$.
\end{proof}

\begin{prpsn}[\cite{Sud}]
If $A_{p}\sim A_{q}$ but $A_{p}\not\cong A_{q}$, then there is a structure $A$ such that $A\sim A_{p}$ but $A$ is not almost prime.
\end{prpsn}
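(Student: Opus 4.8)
The plan is to build $A$ as the union of a strictly increasing elementary chain of elementary substructures of $A_p$ in which copies of both $A_p$ and $A_q$ occur cofinally; the union will then be a ``limit'' model that is bi-embeddable with $A_p$ yet prime over no tuple.

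First I would record the data coming from $A_p\sim A_q$: fix elementary embeddings $f\colon A_p\to A_q$ and $g\colon A_q\to A_p$. Since $A_p\not\cong A_q$, neither map is onto, so $h:=g\circ f$ is a proper elementary self-embedding of $A_p$ while $g(A_q)$ is a proper elementary submodel of $A_p$ isomorphic to $A_q$. In particular $A_p$ contains, as elementary submodels, both copies of itself (e.g.\ $h(A_p)$) and copies of $A_q$ (e.g.\ $g(A_q)$), and indeed $h(A_p)\preceq g(A_q)\preceq A_p$ gives the first steps of a chain for free.

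Next I would construct by recursion inside $A_p$ a chain $D_0\subsetneq D_1\subsetneq\cdots$ of elementary submodels of $A_p$ with $D_0\cong A_p$, with $D_n\cong A_p$ for infinitely many $n$ and $D_n\cong A_q$ for infinitely many $n$, and set $A=\bigcup_n D_n$. Because the $D_n$ form an elementary chain of elementary submodels of $A_p$, Tarski--Vaught gives $A\preceq A_p$; and $A_p\cong D_0\preceq A$, so $A\sim A_p$. The recursion step is, given $D_n\cong X$ (one of $A_p,A_q$) with $D_n\preceq A_p$, to produce a strictly larger $D_{n+1}\cong Y$ (the other type) with $D_n\preceq D_{n+1}\preceq A_p$; one transports the embedding $X\to Y$ supplied by $f$ or $g$ over $D_n$ and realises the needed type data inside $A_p$ over $D_n$, using that any element of $A_p$ lying outside an elementary submodel $N$ realises a non-algebraic type over $N$ (since $\mathrm{acl}(N)=N$ for a model $N\preceq A_p$), so there is always room to extend. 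I expect this amalgamation-inside-$A_p$ to be the main obstacle: the given embeddings $f,g$ push copies \emph{downward}, whereas the chain must grow \emph{upward} while staying below $A_p$ and recurring through the $A_p$-type, and an almost prime model need not be homogeneous. The mutual embeddings $f,g$ are precisely what is available to drive the back-and-forth that keeps the chain inside $A_p$ while forcing both isomorphism types to return cofinally.

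Finally I would verify that $A$ is not almost prime, which is the clean part. Suppose instead $A\cong A_r$ is prime over a finite tuple $\bar c$; equivalently $A$ is the countable atomic model over $\bar c$, unique up to isomorphism by Fact~\ref{1}. As $\bar c$ is finite, $\bar c\subseteq D_m$ for some $m$. For every $n\ge m$ we have $\bar c\in D_n$ and $D_n\preceq A$, so every tuple of $D_n$ has the same, hence isolated, type over $\bar c$ whether computed in $D_n$ or in $A$; thus $D_n$ is a countable atomic model over $\bar c$, whence $D_n\cong A$. Choosing $n\ge m$ with $D_n\cong A_p$ and $n'\ge m$ with $D_{n'}\cong A_q$ yields $A_p\cong A\cong A_q$, contradicting $A_p\not\cong A_q$. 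Hence the model $A$ produced above is bi-embeddable with $A_p$ but not almost prime, which is the assertion.
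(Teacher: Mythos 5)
Your closing verification is correct, and it even supplies a detail the paper leaves implicit: if $A=\bigcup_n D_n$ were prime (equivalently, countable atomic) over a finite tuple $\bar c$, then $\bar c\subseteq D_m$ for some $m$, each $D_n$ with $n\ge m$ is a countable atomic model of $Th(A,\bar c)$, hence isomorphic to $A$ by uniqueness of prime models, and the cofinal occurrence of both isomorphism types would force $A_p\cong A_q$. The genuine gap is exactly where you yourself flag it: the recursion step that keeps the strictly increasing alternating chain \emph{inside} a fixed copy of $A_p$. To pass from $D_n\cong X$ to $D_{n+1}\cong Y$ with $D_n\prec D_{n+1}\preceq A_p$, you must realise over $D_n$, inside $A_p$, an entire copy of $Y$ compatible with the given embedding $X\to Y$: first a tuple $\bar b'$ realising the pull-back of $tp(\bar b/e(X))$ (where $Y$ is prime over $\bar b$), and then an atomic structure over the \emph{infinite} set $D_n\cup\bar b'$ sitting below $A_p$. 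Nothing guarantees either step: $A_p$ is a fixed countable model that is in general neither saturated nor homogeneous, so it need not realise the required type over the parameter set $D_n$, and prime models over infinite parameter sets need not exist in a small theory. Your appeal to ``non-algebraic types leave room'' only produces \emph{some} element outside $D_n$, not one with the prescribed type over $D_n$. The free first steps you exhibit, $h(A_p)\preceq g(A_q)\preceq A_p$, already illustrate the danger: the only evident candidate for the next copy of $A_p$ above $g(A_q)$ is $A_p$ itself, after which the chain cannot grow.

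The paper sidesteps this difficulty entirely by building the chain abstractly rather than inside $A_p$: since $A_p\preceq A_q$ and $A_q\preceq A_p$, one replaces at each step the target structure by an isomorphic copy extending the current one, so that the elementary embedding becomes an inclusion; this requires no ambient model and always succeeds. With that chain, $A_p\cong A_0\preceq A$ is immediate, and the non-primeness of $A$ is verified exactly by your atomicity argument. The price is that the direction $A\preceq A_p$ --- the one your inside-$A_p$ design was intended to secure for free via Tarski--Vaught --- is then not automatic; the paper's one-line proof leaves it, along with the other verifications, to Sudoplatov's original article. So you correctly identified that this embedding direction is the real content of the proposition, but the mechanism you propose for it does not go through as stated, and your proof is incomplete at its main step.
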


\proof Form an elementary chain $A_{0} \subseteq  A_{1} \subseteq \ldots$ where $A_{n} \cong A_{p}$ if $n$ is even and $A_{n} \cong A_{q}$ if $n$ is odd. Put $A=\bigcup\limits_{n\in\omega} A_{n}$.
\endproof

The structure $A$ can be presented as a union of an elementary chain of isomorphic almost 
prime structures, but $A$ itself is not almost prime. 
Following Sudoplatov~\cite{Sud, Sud2}, I shall call such a structure limit:

\begin{dfn}
\textup{A structure is {\em $p$-limit} if it is a union of an elementary chain 
of $p$-prime structures but it itself is not $p$-prime. 
A structure is {\em limit} if it is a union of an elementary chain of isomorphic almost prime
structures but it itself is not almost prime.}
\end{dfn}

\begin{dfn}\label{dfnALT}
\textup{A complete small theory $T$ is an {\em AL theory} if every countable non-saturated model of $T$ is either almost prime or limit.}
\end{dfn}

Definition~\ref{dfnALT} can be rewritten syntactically~\cite{Sud}.

Example~\ref{exlALT} above gives some well-known examples of AL theories. 

Note that a saturated structure is limit if and only if its theory has a non-principal powerful type, i.\,e. $\mathcal{AP}_{T}$ has a maximal element.

Let $\mathcal{LS}_{T}$ denote a set of all limit models of an AL theory $T$.
The structure of the spectrum of models of the theory $T$ is determined by a pre-ordering $
\mathcal{AP}_{T}$ and a function $\lambda_{T}:\mathcal{AP}_{T}\to2^{\mathcal{LS}_{T}}$ mapping a $p$-prime structure to the set of all $p$-limit structures~\cite{Sud,Sud2}. Think of $\lambda_{T}$ as of a disjoint union of bipartite graphs. $\mathcal{LS}_{T}=\bigcup\limits_{M\in
\mathcal{AP}_{T}}\lambda_{T}(M)$.

\begin{ques}
How distinct are the class of AL theories and the class of small theories?
\end{ques}

Naturally, the class of AL theories is a proper subclass of the class of small theories. 
The following definition isolates a model theoretic property of small theories
that might be violated in the class of AL theories. Sudoplatov proved in~\cite{Sud} that
if we relax Definition~\ref{dfnALT} using Definition~\ref{dfnWL} then we get the class
of all small theories. 

\begin{dfn}[\cite{Sud2}]\label{dfnWL}
{\em A structure is {\em weakly limit} if it is the union of an elementary chain of almost prime 
structures. }
\end{dfn}

For the sake of completeness, I include the proof of Sudoplatov's lemma~\cite{Sud} that every countable model of a small theory is either almost prime or weakly limit. 

\begin{lmm}[S. Sudoplatov \cite{Sud}]
Every countable model of a small theory is either almost prime or weakly limit. \qed
\end{lmm}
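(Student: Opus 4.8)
The plan is to prove that every countable model $M$ of a small theory $T$ is either almost prime or weakly limit by building $M$ as the union of an elementary chain of almost prime substructures. Since $M$ is countable, I enumerate its elements as $M = \{m_0, m_1, m_2, \ldots\}$. The key idea is that every finite tuple from a model of a small theory realises a type, and the prime model over that tuple embeds elementarily; so I will approximate $M$ from inside by $p$-prime pieces that grow to exhaust $M$.

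First I would set up the chain. At stage $n$, consider the finite tuple $\bar{a}_n = (m_0, \ldots, m_n)$ and let $p_n$ be its complete type over $\emptyset$, realised in $M$. By Fact~\ref{1}, there is a $p_n$-prime structure, and since $M \models p_n(\bar{a}_n)$, the prime model of $\mathrm{Th}(M, \bar{a}_n)$ embeds elementarily into $M$ as a substructure $A_n \preceq M$ containing $\bar{a}_n$; thus $A_n \cong A_{p_n}$ is almost prime. The crucial point is that I must arrange $A_0 \preceq A_1 \preceq \cdots$ to be an elementary \emph{chain}. Because $A_{n}$ is prime over $\bar a_{n}$ and $\bar a_{n} \subseteq \bar a_{n+1}$, the prime model over the larger tuple can be chosen to contain $A_n$: build $A_{n+1}$ as a prime substructure of $M$ over $\bar a_{n+1}$ that sits above $A_n$, using that primeness gives an elementary embedding of $A_{p_{n+1}}$ into $M$ over $\bar a_{n+1}$, and homogeneity/primeness lets me take the image to contain the countable prime piece $A_n$. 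Each $A_n$ is almost prime by construction.

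Next I would verify that $\bigcup_n A_n = M$. This is immediate from the enumeration, since $m_n \in \bar{a}_n \subseteq A_n$, so every element of $M$ lies in some $A_n$. Together with the elementary chain condition, Tarski--Vaught gives $\bigcup_n A_n \preceq M$, and since the union equals $M$ as a set, $M$ is the union of an elementary chain $A_0 \preceq A_1 \preceq \cdots$ of almost prime structures. By Definition~\ref{dfnWL}, $M$ is weakly limit---unless the chain stabilises, i.e.\ $M \cong A_n$ for some $n$, in which case $M$ is itself almost prime. This dichotomy is exactly the conclusion.

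The main obstacle I expect is the coherent choice of the substructures so that they genuinely form an \emph{elementary} chain inside $M$, rather than merely a sequence of almost prime elementary substructures. The issue is that $A_{p_{n+1}}$ is prime over $\bar a_{n+1}$ and a priori its canonical copy in $M$ need not contain the previously constructed $A_n$. To handle this I would exploit the uniqueness (up to isomorphism over the tuple) of the prime model given by Fact~\ref{1} together with the fact that $A_n$ is itself prime over $\bar a_n \subseteq \bar a_{n+1}$: primeness of $A_{n+1}$ over $\bar a_{n+1}$ yields an elementary map from $A_{n+1}$ into $M$ fixing $\bar a_{n+1}$, and a back-and-forth/saturation argument over the finite tuple lets me adjust the embedding so that its image contains $A_n$, making the inclusion $A_n \preceq A_{n+1}$ elementary. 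Once this chaining is secured, the rest is a routine application of the Tarski--Vaught elementary chain theorem.
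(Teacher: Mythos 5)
Your overall strategy coincides with the paper's: enumerate the countable model, take inside $M$ an elementary submodel prime over a growing finite tuple at each stage, observe that the union of the stages is all of $M$, and conclude by the dichotomy (if the process stabilises at a finite stage, $M$ is almost prime; otherwise $M$ is weakly limit). The only cosmetic difference is that you use the whole initial segment $(m_0,\ldots,m_n)$ as the parameter tuple at stage $n$, whereas the paper enlarges the tuple only by the first enumerated element $b_{n+1}\in M\setminus A_n$; this does not change anything essential.

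The problem is the chaining step $A_n\preceq A_{n+1}$, which you correctly single out as the crux but then dispose of with an argument that does not work. A prime model of $\mathrm{Th}(M,\bar a_{n+1})$ is \emph{atomic} over $\bar a_{n+1}$: every element of every elementary copy of $A_{p_{n+1}}$ inside $M$ over $\bar a_{n+1}$ realises an isolated type over $\bar a_{n+1}$. But isolation is not preserved under adding parameters: $tp(c/\bar a_n)$ can be isolated while $tp(c/\bar a_n m_{n+1})$ is not --- by the pair lemma, $tp(c\,m_{n+1}/\bar a_n)$ isolated would force both $tp(c/\bar a_n m_{n+1})$ and $tp(m_{n+1}/\bar a_n)$ isolated, and nothing prevents the new element from sitting in a ``limit position'' over $\bar a_n c$ (small theories do admit such configurations). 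So some element of your $A_n$ may fail to be atomic over $\bar a_{n+1}$, and then \emph{no} copy of $A_{p_{n+1}}$ in $M$ contains $A_n$: no back-and-forth adjustment of the embedding can import an element realising a type that the target structure omits outright. The ``homogeneity/saturation over the finite tuple'' you invoke is not available either --- these models are atomic rather than saturated, and homogeneity only permutes realisations of types that are already realised. A genuine repair must do something different, for instance allow the parameter tuple $\bar c_{n+1}$ to be strictly larger than $\bar a_{n+1}$, chosen so that all of $A_n\cup\{m_{n+1}\}$ becomes atomic over it, or choose the submodels $A_n$ with foresight about the parameters to come. For what it is worth, the paper's own write-up asserts ``we obtain an elementary chain'' at exactly this point without further comment, so you have identified a real subtlety that the paper glosses over; but the specific resolution you propose is the step that would fail.
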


\begin{proof}
Let $A$ be a countable model of a small theory $T$. Let $a_0,a_1,\ldots$ be an enumeration 
of the domain of $A$. Consider the type $tp(a_0)$ of $a_0$ in $A$. Since $T$ is small, there 
is a prime model $(A_0,a_0)$ of $tp(a_0)$ in the signature enriched by one constant. We can 
choose $A_0$ to be an elementary submodel of $A$. If $A_0$ is isomorphic to $A$, we are 
done and $A$ is almost prime. If not, take the first element $b_1$ in our enumeration 
$a_0,a_1,\ldots$ such that $b_1\in A\setminus A_0$. Consider a prime model $
(A_1,a_0,b_1)$ of $tp(a_0,b_1)$. Again, if $A_1$ is isomorphic to $A$ then $A$ is almost 
prime. If not, we continue the process. If the process terminates after finitely many steps, the 
model $A$ is prime over $(a_0,b_1,\ldots,b_n)$ for some $n$ and hence is almost prime. If 
not, we obtain an elementary chain $A_0\preceq A_1\preceq\ldots$ of almost prime 
structures. Since $a_0,a_1,\ldots$ is an enumeration of all elements of $A$, we have that $
\cup_iA_i=A$ and $A$ is weakly limit.
\end{proof}

The following general theorem shows that the class of AL theories is rather different from
the class of all small theories if we study decidable models. This theorem
is the key property to the absence of Goncharov-Millar counterexamples in the class
of AL theories. 

\begin{thrm}\label{thrmIdeal}
Let $T$ be an AL theory. Then the set of decidable almost prime models of $T$ 
forms an ideal in the fundamental order of $T$.
\end{thrm}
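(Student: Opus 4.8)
The plan is to verify the two defining properties of an ideal for the set $\mathcal D$ of (isomorphism types of) decidable almost prime models, regarded as a subset of the fundamental order $(\mathcal{AP}_T/\!\sim,\preceq)$: that $\mathcal D$ is downward closed and that it is upward directed. (Nonemptiness is immediate once a single decidable almost prime model is exhibited, and otherwise $\mathcal D$ is the trivial ideal.) Both properties reduce to one technical lemma, which I will call the \emph{location lemma}: if $M$ is a decidable model of $T$ and $\bar b$ is a tuple in $M$, then the prime model over $\bar b$ is decidable.

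For downward closure, suppose $A_p\in\mathcal D$ and $A_q\preceq A_p$. By definition of $\preceq$ there is a tuple $\bar b$ in $A_p$ with $\mathrm{tp}(\bar b)=q$, and $A_q$ is, up to isomorphism, the prime model over $\bar b$. Since $A_p$ is decidable, the location lemma applied to $M=A_p$ gives that $A_q$ is decidable, so $A_q\in\mathcal D$. For directedness, take $A_p,A_q\in\mathcal D$ with generating tuples $\bar a\models p$ and $\bar b\models q$. Because $A_q$ is decidable the type $q$ is computable, so by a standard Henkin construction I may realize $q$ in a decidable elementary extension $M$ of $A_p$, say by a tuple $\bar b'$. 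Setting $r=\mathrm{tp}(\bar a\,\bar b')$ in $M$, the prime model $A_r$ over $\bar a\bar b'$ satisfies $A_p\preceq A_r\succeq A_q$, and the location lemma applied to $M$ shows $A_r$ is decidable; hence $A_r\in\mathcal D$ is a common upper bound.

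The main obstacle is the location lemma itself, and this is exactly where the AL hypothesis must enter: for a general small theory the prime model over a tuple of a decidable model need not be decidable, which is the content of the Goncharov--Millar counterexamples. The difficulty is effective. Given the decidable oracle $M$ one can decide the type over $\bar b$ of any tuple and can semidecide, in a $\Pi^0_1$ fashion, whether a formula isolates a complete type over $\bar b$, but neither of these yields outright an effective enumeration of a complete set of isolating formulas, which is what the standard criterion for a decidable prime model requires. My plan is to use the AL dichotomy to upgrade this $\Pi^0_1$ information to an effective construction. Concretely, I would build a decidable copy of the prime model over $\bar b$ as the union of an elementary chain of finite atomic-over-$\bar b$ approximations, realizing successive isolated types using $M$ as a source of realizations. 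By Definition~\ref{dfnALT}, among countable non-saturated models the union of such a chain is either almost prime or limit; the whole point is to arrange the bookkeeping so that the chain exhausts every isolated type over $\bar b$ and stays atomic, thereby landing on the almost prime alternative $A_q$ rather than on a $q$-limit model. Guaranteeing that the construction remains atomic, equivalently certifying isolating formulas on the fly, is the crux, and it is precisely the absence of non-almost-prime, non-limit models guaranteed by the AL hypothesis that I expect to make this bookkeeping succeed where it fails for arbitrary small theories.
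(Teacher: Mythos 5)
Your reduction of the theorem to downward closure plus upward directedness is the right frame, and your observation that downward closure follows once one knows that the prime model over a tuple of a decidable model is decidable matches the shape of the problem. But your ``location lemma'' is the entire content of the theorem, and you do not prove it --- you only sketch a plan, and the plan would not go through as described. Your chain construction must certify, effectively and on the fly, that each step realizes a type principal over $\bar b$, and you yourself concede that the decidable oracle $M$ gives only $\Pi^0_1$ information about isolation. The AL hypothesis cannot repair this in the way you hope: Definition~\ref{dfnALT} constrains which countable models exist, but it does not convert a $\Pi^0_1$ predicate into a computable enumeration, and knowing that the union of your chain is ``almost prime or limit'' is no help, since the $q$-limit outcome is exactly the failure mode you must exclude. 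The paper uses AL through an entirely different mechanism (Lemma~\ref{APdecidable}): if $A$ is a decidable $p$-limit model, then, being a union of an elementary chain of $p$-prime models, $A$ realizes exactly the same types as $A_p$; hence decidability of $A$ yields a \emph{uniformly computable enumeration} of the principal types realized in $(A_{\bar a},\bar c)$, and the effective criterion for decidability of prime models (Goncharov--Nurtazin/Harrington, see~\cite{Goncharov}) then gives that $A_p$ is decidable. That is, AL enters via the coincidence of type spectra of limit and almost prime models, not via on-the-fly certification of atomicity.

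Moreover, downward closure requires a second ingredient your proposal omits entirely. If $A_q\preceq A_p$ with $A_p$ decidable, the oracle $A_p$ realizes over $\bar b$ types that $A_q$ omits (for instance the type of the generating tuple of $A_p$), so no enumeration of \emph{exactly} the types of $A_q$ can be read off $A_p$ directly; your location lemma in the stated generality is precisely what cannot be extracted this way, and for arbitrary small theories it is false by the Goncharov--Millar examples you cite. The paper's proof first uses AL to conclude that only finitely many almost prime type spectra lie between $A_q$ and $A_p$, then applies Millar's effective type omitting theorem~\cite{Millar} to build a decidable model $B$ whose type spectrum equals that of $A_q$, and only then invokes Lemma~\ref{APdecidable} to land on a decidable copy of $A_q$. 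Finally, your directedness step also has a gap: the ``standard Henkin construction'' of a decidable elementary extension of $A_p$ realizing $q$ needs a \emph{decidable complete} extension of the elementary diagram of $A_p$ together with $q(\bar b')$, which is not automatic (type spaces over an entire model of a small theory need not be countable); the paper avoids this by realizing the two decidable types $p$ and $q$ jointly in a single decidable model, using smallness --- otherwise $T$ would have uncountably many types --- to find a suitable decidable completion of $p(\bar x)\cup q(\bar y)$.
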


\proof
Let us first note the following property of decidable models of an AL theory, which is
interesting on its own. 

\begin{lmm}\label{APdecidable}
Let $T$ be an AL theory and $A$ its non-saturated decidable model. Then there
exist a type $p$ and a decidable $p$-prime model $A_p$ such that the type
spectra of the structures $A$ and $A_p$ coincide.  
\end{lmm}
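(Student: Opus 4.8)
The plan is to exploit the AL dichotomy and then reduce decidability of a $p$-prime model to the effective prime model criterion of Goncharov--Nurtazin. Since $T$ is an AL theory and $A$ is non-saturated, Definition~\ref{dfnALT} forces $A$ to be either almost prime or limit. If $A$ is almost prime, then $A\cong A_p$ for the appropriate $p$ and $A$ itself is already a decidable $p$-prime model, so I would simply take $A_p=A$; the type spectra then coincide trivially. Thus the whole content of the lemma sits in the limit case, and that is where I would concentrate.

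Assume $A$ is limit, so $A=\bigcup_i A_i$ is the union of an elementary chain $A_0\preceq A_1\preceq\cdots$ of pairwise isomorphic almost prime structures, all isomorphic to a single $p$-prime model. First I would dispose of the model-theoretic half. Fix a tuple $\bar a\in A_0$ realizing $p$ so that $(A_0,\bar a)$ is prime over $Th(A,\bar a)$. Because each $A_i\preceq A$, every type realized in $A$ over $\bar a$ is realized in some $A_i$; and since each $(A_i,\bar a)$ is isomorphic to the prime model and therefore realizes exactly the principal types of $Th(A,\bar a)$, the type spectrum of $A$ over $\bar a$ is precisely the set of principal types of $Th(A,\bar a)$. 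The required coincidence of the (parameter-free) type spectra of $A$ and $A_p$ is then automatic, since both equal that of $A_0$; so the genuine difficulty is only the \emph{decidability} of the $p$-prime model I must produce.

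For that I would turn to computability. As $A$ is decidable, so is $(A,\bar a)$ after naming the finitely many parameters $\bar a$ by constants, whence $Th(A,\bar a)$ is a decidable complete theory. Decidability of $(A,\bar a)$ also lets me compute, uniformly in an index for a finite tuple $\bar b$ of $A$, the complete type $tp^A(\bar b/\bar a)$ as a decidable set of formulas. Running through all finite tuples of $A$ thus yields a uniformly computable family of complete types which, by the previous paragraph, enumerates exactly the principal types of $Th(A,\bar a)$. Feeding the decidable theory $Th(A,\bar a)$ together with this uniformly computable family of its principal types into the Goncharov--Nurtazin effective prime model theorem gives a decidable copy of the prime model of $Th(A,\bar a)$, i.e. the desired decidable $p$-prime $A_p$, whose type spectrum matches that of $A$.

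The step I expect to be the main obstacle is the interface with the effective prime model theorem. The delicate point is that the decidable model $A$ presents the principal types only as computable types (with uniformly decidable membership problems), not accompanied by isolating formulas, since verifying that a formula is complete over a decidable theory is merely a $\Pi^0_1$ condition. I would therefore invoke the formulation of Goncharov--Nurtazin requiring only a uniformly computable family enumerating all principal types, and verify carefully that the family extracted from $A$ satisfies exactly this hypothesis: each listed type is principal because it is realized in the limit model, which over $\bar a$ realizes only principal types, and every principal type is listed because the type spectrum of $A$ exhausts them.
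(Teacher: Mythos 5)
Your treatment of the almost prime case and your observation that the whole content lies in the limit case match the paper, but the limit case of your proposal contains a fatal error, located exactly at the step you flag as delicate --- though not for the reason you give. You assert that each $(A_i,\bar a)$ is isomorphic to the prime model of $Th(A,\bar a)$ and conclude that the type spectrum of $A$ over $\bar a$ is precisely the set of principal types of $Th(A,\bar a)$. Only $(A_0,\bar a)$ is prime, by your choice of $\bar a$: for $i\geq 1$ you know merely that $A_i\cong A_p$ as an unpointed structure, and the isomorphism need not carry the distinguished tuple of $A_p$ to your fixed $\bar a$; a $p$-prime structure in general contains realizations of $p$ over which it is not atomic. (For instance, in $Th(\mathbb{Q},<,c_0<c_1<\cdots)$ with $p$ the type lying above all the $c_i$, the $p$-prime model is prime over the least realization $a$ of $p$ but not over any realization $b>a$, since $tp(a/b)$ fills the cut between the $c_i$'s and $b$ and is nonprincipal.) Worse, your conclusion is self-refuting: if every type realized in $A$ over $\bar a$ were principal in $Th(A,\bar a)$, then $(A,\bar a)$ would be a countable atomic, hence prime, model of $Th(A,\bar a)$, so $A$ would be $p$-prime --- contradicting the case hypothesis that $A$ is limit. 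Since $A$ is limit, over \emph{every} realization of $p$ it realizes some nonprincipal type, so the uniformly computable family $\{tp^A(\bar b/\bar a):\bar b\in A^{<\omega}\}$ you extract necessarily contains nonprincipal types, and the Goncharov--Nurtazin criterion cannot be applied to it: with a nonprincipal type in the list, the effective Henkin construction may realize it, and the resulting decidable model is then not prime over $\bar a$. Your $\Pi^0_1$ worry about isolating formulas is legitimate but secondary; the hypothesis of the effective prime model theorem that actually fails is that every listed type be principal.

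This is also where your route diverges from the paper's. The paper never claims that $A$ itself is atomic over $\bar a$; it uses decidability of $A$ to obtain a uniformly computable enumeration of the types realized in $A$, and from this extracts an enumeration of the principal types realized in the prime model $(A_{\bar a},\bar c)$ of $Th(A,\bar a)$ --- that is, it enumerates types of tuples of the \emph{prime} model over its distinguished constants, not types of tuples of $A$ over $\bar a$ --- concluding that $(A_{\bar a},\bar c)$, and hence $A_{\bar a}$, is decidable, and finally noting that the parameter-free type spectra of $A$ and $A_{\bar a}$ coincide. That last coincidence is the one part of your model-theoretic paragraph that is correct (every tuple of $A$ lies in some $A_i\cong A_p$, and $A_0\preceq A$), but it holds for parameter-free types, not for types over $\bar a$: the two spectra over $\bar a$ differ exactly by the nonprincipal types that witness $A$ being limit rather than prime. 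To repair your write-up you would have to explain how to pass effectively from the uniformly computable spectrum of $A$ to a listing of exactly the principal types of $Th(A,\bar a)$; this passage is not the identity, and treating it as such is the gap.
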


\proof
If $A$ is almost prime, there is nothing to prove. Suppose $A$ is $p$-limit for some $p$. 
Since $A$ is decidable, the set of types realised in $A$ is uniformly computable. 
This gives a uniformly computable enumeration of principal types realised in 
$(A_{\bar a},\bar c)$, where $\bar a$ is a realisation of $p$ in $A$. 
Hence, $(A_{\bar a},\bar c)$ is decidable and so is $A_{\bar a}$. It remains
to note that the type spectra of $A$ and $A_{\bar a}$ coincide. 
\endproof

I now prove that the set of decidable models of an AL theory
is directed upwards. 

\begin{lmm}
Let $T$ be an AL theory and $A$ and $B$ be its decidable models. 
Then there exists a decidable model $C$ such that $C\succeq A$
and $C\succeq B$.
\end{lmm}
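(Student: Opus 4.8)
The plan is to produce $C$ as a decidable common elementary extension of $A$ and $B$, so that $A\preceq C$ and $B\preceq C$ hold by construction; the entire difficulty lies in keeping $C$ decidable. First I would dispose of the saturated case: if either $A$ or $B$ is saturated, then, being elementarily universal, it is already a common elementary extension of both, and it is decidable by hypothesis, so it serves as $C$. Hence I may assume both $A$ and $B$ are non-saturated. I would then invoke Lemma~\ref{APdecidable} to fix decidable almost prime models $A_p$ and $B_q$ whose type spectra coincide with those of $A$ and $B$. The purpose of this reduction is not to put $A_p,B_q$ below $C$ in place of $A,B$ (if $A$ is $p$-limit then I still need $C\succeq A$, and $A\not\preceq A_p$ in general), but to extract two pieces of effective data: the type spectra of $A$ and of $B$ are uniformly computable, and $A_p$, $B_q$ are effectively atomic, i.e.\ the principal types over a realisation of $p$ (resp.\ of $q$) are uniformly computable.

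Next I would build $C$ by an effective Henkin amalgamation. Working in the language of $T$ enriched by a constant $c_a$ for each $a\in A$ and a constant $d_b$ for each $b\in B$, I start from the theory $ED(A)\cup ED(B)$, whose consistency is the classical fact that the elementarily equivalent structures $A$ and $B$ admit a common elementary extension, and I extend it in stages to a complete theory $T^{*}$ with Henkin witnesses whose term model $C$ is decidable. Since both diagrams $ED(A)$ and $ED(B)$ are computable, any sentence mentioning only the $c_a$ (resp.\ only the $d_b$) is decided outright. In the term model $C$ one then has $A\preceq C$ via $a\mapsto c_a$ and $B\preceq C$ via $b\mapsto d_b$, so $C$ realises both $p$ and $q$, and $C$ is decidable provided $T^{*}$ is kept computable throughout.

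The heart of the argument, and the step I expect to be the main obstacle, is the decision of the \emph{mixed} sentences, i.e.\ committing to how the copy of $B$ sits over the copy of $A$. Each time I want to place the next $B$-configuration over the finite data already fixed, I must extend the running finite type by a realisation of the appropriate type of $B$ while staying consistent with $ED(A)$. A priori this consistency condition is only $\Pi^{0}_{1}$: it asks that every finite approximation of an infinite type be jointly satisfiable over $A$, which is precisely the phenomenon producing the Goncharov--Millar anomalies in arbitrary small theories. Here the AL hypothesis is what rescues effectiveness. The subtle sub-case is the limit one, where the $B$-types to be realised are not individually principal; I would handle it by exhibiting the $q$-limit $B$ as an effective increasing union of atomic closures of repeated realisations of $q$, each step being the realisation of a single principal formula supplied by the effective atomicity of $B_q$, and symmetrically for the possibly limit $A$. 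Because for a decidable structure the satisfiability of a single existential formula with parameters is decidable, this collapses the $\Pi^{0}_{1}$ consistency checks to decidable ones and lets the Henkin construction proceed computably, so that $T^{*}$, and hence $C$, is decidable.

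Finally I would record why $C$ is a legitimate witness and use the AL structure as a consistency check. By construction $C$ is countable and a common elementary extension of $A$ and $B$; if $C$ turns out to be almost prime, then the analysis of Lemma~\ref{APdecidable} reconfirms its decidability directly from the uniform computability of its type spectrum. The point requiring the most care is that the mixed types introduced during the amalgamation must not escape the effectively controlled family --- in other words, that the type spectrum of the amalgam stays uniformly computable. This closure property is exactly where general small theories differ from AL theories, so it is the place where I would concentrate the verification.
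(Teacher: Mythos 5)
There is a genuine gap at the heart of your Henkin amalgamation, namely the claim that the AL hypothesis collapses the $\Pi^0_1$ consistency checks for mixed sentences to decidable ones. Deciding whether a sentence $\phi(c_{\bar a},d_{\bar b})$ can consistently be added to the running completion of $ED(A)\cup ED(B)$ amounts to deciding whether the set $\{\theta(\bar y):\theta\in \mathrm{tp}^B(\bar b)\}\cup\{\phi(\bar a,\bar y)\}$ is finitely satisfiable in $A$; decidability of single existential formulas in a decidable structure only makes each \emph{finite} instance decidable, so the full check remains $\Pi^0_1$, exactly as you observe. Your proposed remedy --- writing the limit model as an effective union of atomic closures over realisations of $q$ --- presupposes that a realisation of $q$ has already been placed consistently and computably over the full $A$-side of the diagram, i.e.\ that a complete type $r(\bar x,\bar y)\supseteq p(\bar x)\cup q(\bar y)$ has been chosen computably. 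That choice is never made in your sketch; you flag the uniform computability of the amalgam's type spectrum as ``the place to concentrate the verification,'' but that verification \emph{is} the lemma, so the argument is circular precisely at its crux.

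The missing idea, and the paper's actual route, is much shorter: after invoking Lemma~\ref{APdecidable} to replace $A$ and $B$ by decidable almost prime models $A_p$ and $B_q$ (so that $p$ and $q$ are decidable types), one uses smallness of $T$ to get a \emph{decidable} completion of $p(\bar x)\cup q(\bar y)$ --- if no completion were decidable, the countable closed set of completions would have no isolated point and $T$ would have uncountably many types --- and then realises this decidable type in a decidable model $C$ by the standard effective realisation theorem. Then $A_p\preceq C$ and $B_q\preceq C$ simply because almost prime models are prime over realisations of their defining types; no amalgamation of the two elementary diagrams is ever performed. You explicitly rejected this reduction on the grounds that a $p$-limit $A$ satisfies $A\not\preceq A_p$ in general; that reading of the literal statement is fair, but the lemma is only used in Theorem~\ref{thrmIdeal} to show that the decidable \emph{almost prime} models are upward directed in the fundamental order, where replacing $A$ by $A_p$ is harmless. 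By insisting on a decidable common elementary extension of $A$ and $B$ themselves you set yourself a strictly harder problem --- one whose effective content is exactly the Goncharov--Millar obstruction you name --- and the sketch does not overcome it.
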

\proof
Using Lemma~\ref{APdecidable} above, one can assume that $A$ is a $p$-prime and $B$
is a $q$-prime structure for some decidable types $p$ and $q$. Take $C$
to be a decidable structure that realises both $p$ and $q$. Such a structure 
exists because otherwise the theory $T$ would have more than countably
many types. 
\endproof

To finish the proof of the theorem, we need to show that the set of decidable 
almost prime models of $T$
is closed downwards. Let $A_p$ be a decidable almost prime model of $T$ 
and $A_q$ be an almost prime model of $T$ such that $A_q\preceq A_p$. 
We want to show that $A_q$ is decidable. Since $T$ is AL, 
there are only finitely many almost prime structures with pairwise 
distinct type spectra $A_0=A_q,A_1,\ldots,A_n=A_p$ such that 
$A_0\preceq A_1\preceq\ldots\preceq A_n$. Using Millar's type omitting
theorem~\cite{Millar} (see also~\cite{Goncharov}), we can realise all the
types that are realised in $A_q$ and omit all the types that are not realised
in $A_q$, in a decidable structure $B$. That is, the structure $B$ has the same
type spectra as $A_q$ and is decidable. We apply Lemma~\ref{APdecidable} to 
prove that $A_q$ is itself decidable. Since the set of decidable almost prime 
models of the theory $T$ is directed upwards and closed downwards, it
forms an ideal. 
\endproof

\begin{crlr}\label{crlrPrimeDec}
If an AL theory $T$ is decidable then $T$ has a decidable prime model.
\end{crlr}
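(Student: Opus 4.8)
The plan is to read off the conclusion from the ideal structure supplied by Theorem~\ref{thrmIdeal}, using only two order-theoretic facts. Write $I$ for the set of decidable almost prime models of $T$, viewed inside the fundamental order $(\mathcal{AP}_{T}/\sim,\preceq)$. By Theorem~\ref{thrmIdeal} the set $I$ is an ideal, so in particular it is downward closed. On the other hand, the prime model $P$ of $T$ is the least element of the fundamental order. Hence, as soon as $I$ is nonempty, I can pick any $M\in I$, note that $P\preceq M$ because $P$ is least, and conclude $P\in I$ by downward closure; that is, $P$ is decidable. Thus the entire corollary reduces to the single assertion that $I\neq\varnothing$, and this is the only place where decidability of $T$ will be used.

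To see that $I$ is nonempty I would first invoke the classical fact that a complete decidable theory has a decidable model; let $A$ be such a model of $T$. The tool for turning $A$ into a decidable almost prime model is Lemma~\ref{APdecidable}, which from any decidable \emph{non-saturated} model produces a decidable $p$-prime model with the same type spectrum. So if $A$ is non-saturated I am immediately done: Lemma~\ref{APdecidable} places a decidable almost prime model into $I$, and the previous paragraph finishes the argument.

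The one case Lemma~\ref{APdecidable} does not cover is when the decidable model $A$ happens to be saturated, and I expect this to be the main obstacle. If $T$ is $\aleph_{0}$-categorical there is nothing to do, since by Example~\ref{exlALT}(1) the saturated model is then itself almost prime and hence lies in $I$. If $T$ is not $\aleph_{0}$-categorical, $T$ has a non-principal type, and the plan is to manufacture a non-saturated decidable model by omitting such a type: using Millar's effective omitting types theorem~\cite{Millar} I would build a decidable model $B$ of $T$ that omits a non-principal type, so that $B$ is non-saturated, and then apply Lemma~\ref{APdecidable} to $B$. The delicate point---and the step I would expect to require the most care---is securing a type that is simultaneously non-principal and effectively omittable in a decidable model, so that Millar's construction genuinely applies; once $B$ is in hand, the reduction of the first paragraph again yields that $P$ is decidable.
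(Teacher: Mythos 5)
Your proof is correct, and it follows the paper's skeleton in the main case while diverging in how the saturated case is handled. Like the paper, you start from a decidable model $A$ of the decidable theory $T$ and, when $A$ is non-saturated, feed it through Lemma~\ref{APdecidable} and the ideal structure of Theorem~\ref{thrmIdeal}; your explicit reduction (prime model $=$ least element of the fundamental order, so downward closure of the ideal makes everything hinge on $I\neq\varnothing$) is exactly what the paper leaves implicit in the phrase ``we apply Theorem~\ref{thrmIdeal}.'' The divergence is the saturated case: the paper simply cites the classical fact that a theory with a decidable saturated model has a decidable prime model (see~\cite{Goncharov}), whereas you re-derive a path around it, splitting on $\aleph_{0}$-categoricity and otherwise using Millar's effective omitting types theorem to manufacture a non-saturated decidable model $B$, which you then push through the same Lemma~\ref{APdecidable} pipeline. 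The ``delicate point'' you flag in fact dissolves: since $A$ is saturated and decidable, every type of $T$ is realised in $A$ and hence decidable (its formulas can be read off the decidable elementary diagram), and a non-principal type exists because $T$ is not $\aleph_{0}$-categorical (Ryll--Nardzewski, plus smallness); Millar's theorem for a single decidable non-principal type of a decidable theory then applies without further effort. Comparing the two routes: the paper's citation is shorter and, notably, the fact it invokes holds for all small theories (no AL hypothesis needed in that case), while your argument is more self-contained relative to the tools already deployed in the paper---Millar's omitting types theorem is used in the proof of Theorem~\ref{thrmIdeal} anyway---at the cost of leaning on the AL hypothesis (via Lemma~\ref{APdecidable}) even in the saturated case, where it is not actually needed.
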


\proof
Since the theory $T$ is decidable, it has a decidable model $A$. If the model 
$A$ is saturated then the prime model of $T$ is decidable, see~\cite{Goncharov}. 
If the model $A$ is not saturated then it is $p$-prime or $p$-limit
and we apply Theorem~\ref{thrmIdeal} to prove 
that the prime model of $T$ is decidable. 
\endproof

Note that Corollary~\ref{crlrPrimeDec} does not hold in the class of all small 
theories. In~\cite{gavruskin}, we construct a decidable small theory $T$ whose all
types are decidable yet whose prime model is not decidable. Millar~\cite{Millar}
was the first to construct such an example but his construction uses an infinite 
language in an essential way, while our structure is a graph. Since the prime model 
of a theory with decidable saturated model is decidable, we have an example of
a decidable small theory whose saturated model is not decidable. 

Although the class of AL theories and the class of small theories have similar 
model theoretic properties, they are significantly different from the computability
point of view. Corollary~\ref{crlrPrimeDec} along with the following corollary 
demonstrate this difference. 

\begin{crlr}
If $T$ is a decidable AL theory whose types are all decidable, then every almost prime model 
of $T$ is decidable.
\end{crlr}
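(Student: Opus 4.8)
The plan is to reduce the statement to Corollary~\ref{crlrPrimeDec} by naming a realisation of the defining type. Fix an arbitrary almost prime model $A_p$ of $T$, where $p$ is the type over which it is prime in the sense of Fact~\ref{1}. Since every type of $T$ is decidable, $p$ is decidable. Introduce constants $\bar c$ for a realisation of $p$ and consider the enriched theory $T_p = Th(A_p,\bar c)$. Because $T$ is complete and $p$ is a complete type, $T_p$ is the deductive closure of $T\cup p(\bar c)$ and is therefore a complete theory in the enriched signature; moreover it is decidable, since both $T$ and $p$ are decidable. By construction the prime model of $T_p$ is exactly $(A_p,\bar c)$, so it suffices to show that this prime model is decidable, as the reduct of a decidable structure to a smaller signature is decidable.

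To obtain a decidable prime model of $T_p$ from Corollary~\ref{crlrPrimeDec}, I must verify that $T_p$ is again an AL theory. The key observation is that naming finitely many constants preserves the relevant model-theoretic hierarchy. A model $(M,\bar c)$ of $T_p$ is $\aleph_0$-saturated exactly when $M$ is; and if $M$ is prime over a realisation $\bar b$ of some type of $T$, then $(M,\bar c)$ is atomic, hence prime, over $\bar b$ in the enriched signature, so $M$ almost prime implies $(M,\bar c)$ almost prime over $T_p$. A limit model of $T$ should transfer to a limit model of $T_p$ by absorbing $\bar c$ into the bottom structure of its defining elementary chain and checking that the resulting chain is still a chain of pairwise isomorphic almost prime structures of $T_p$. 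Granting this, every countable non-saturated model of $T_p$ is almost prime or limit, so $T_p$ satisfies Definition~\ref{dfnALT}.

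With $T_p$ established as a decidable AL theory, Corollary~\ref{crlrPrimeDec} yields a decidable prime model of $T_p$, namely $(A_p,\bar c)$; taking the reduct to the language of $T$ shows that $A_p$ is decidable, and since $A_p$ was arbitrary, every almost prime model of $T$ is decidable. I expect the main obstacle to be the verification that $T_p$ is an AL theory, and specifically the limit case: one must confirm that the isomorphisms witnessing that the members of the defining chain are isomorphic can be chosen to fix $\bar c$, so that $(M,\bar c)$ is genuinely a union of an elementary chain of \emph{isomorphic} almost prime models of $T_p$ rather than merely weakly limit in the sense of Definition~\ref{dfnWL}. The hypothesis that all types of $T$ are decidable enters only to guarantee that each defining type $p$ is decidable, which is precisely what makes each enriched theory $T_p$ decidable and hence eligible for Corollary~\ref{crlrPrimeDec}.
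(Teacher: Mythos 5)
The skeleton of your reduction is sound: since $p$ is a complete type of the complete theory $T$, every sentence of the enriched language has the form $\phi(\bar c)$ and belongs to $T_p$ iff $\phi(\bar x)\in p$, so decidability of $p$ does give decidability of $T_p$; the prime model of $T_p$ is indeed $(A_p,\bar c)$; $T_p$ is small; and your almost prime case transfers correctly (if $M$ is atomic over $\bar d$ then, $\bar c$ being a tuple of $M$, the type $tp(\bar c/\bar d)$ is principal and $(M,\bar c)$ is atomic over $\bar d\,\bar c$, hence almost prime over $T_p$). The genuine gap is exactly the point you flagged and then waved through with ``Granting this'': you have not shown that $T_p$ is an AL theory, and the limit case does not go through as sketched. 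If $M=\bigcup_n M_n$ witnesses that $M$ is limit over $T$, the isomorphisms between the $M_n$ are isomorphisms of the plain structures, and there is no reason they can be chosen to respect $\bar c$; after absorbing $\bar c$ into $M_0$, the expansions $(M_n,\bar c)$ do form an elementary chain of almost prime models of $T_p$, but they need not be pairwise isomorphic --- each $(M_n,\bar c)$ is prime over some tuple $\bar d_n$, and the types $tp(\bar c\,\bar d_n)$ may be pairwise distinct, so not even a subchain of isomorphic members can be extracted by pigeonhole. What your argument actually establishes is only that $(M,\bar c)$ is weakly limit in the sense of Definition~\ref{dfnWL}; but by Sudoplatov's lemma \emph{every} countable model of \emph{every} small theory is almost prime or weakly limit, so this carries no force, and Definition~\ref{dfnALT} for $T_p$ remains unverified. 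Without a preservation lemma (``the AL property is preserved under naming a realisation of a type''), which is a nontrivial claim you would have to prove, Corollary~\ref{crlrPrimeDec} cannot be invoked for $T_p$, and the proof is incomplete.

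The paper's own proof sidesteps all of this and is two lines long: since $p$ is a decidable type of the decidable theory $T$, an effective Henkin construction realises $p$ in \emph{some} decidable model $B$ of $T$, with no control over the isomorphism type of $B$; since $A_p$ elementarily embeds into every model of $T$ realising $p$, Lemma~\ref{APdecidable} and the downward closure half of Theorem~\ref{thrmIdeal} then force $A_p$ itself to be decidable. In other words, the paper moves sideways to an arbitrary decidable model sitting above $A_p$ in the fundamental order and then descends through the ideal, whereas you try to re-enter Corollary~\ref{crlrPrimeDec} through an enriched theory and thereby create the preservation problem. If you wish to salvage your route, the missing preservation lemma is precisely what you must prove; alternatively, observe that Corollary~\ref{crlrPrimeDec} is itself just an application of Theorem~\ref{thrmIdeal}, and applying Theorem~\ref{thrmIdeal} directly to $T$ removes any need to change the theory at all.
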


\proof
Let $A_p$ be an almost prime model. Since $p$ is a decidable type, it can be realised 
is a decidable model. Applying Theorem~\ref{thrmIdeal} we get that $A_p$ is decidable. 
\endproof




In the light of Theorem~\ref{thrmIdeal}, a natural question to ask would be what ideals can be 
formed by decidable models? 
The following theorem of ours answers this question in full for the class of theories with
lattice-like fundamental orders. The question remains open in the general case of an arbitrary
fundamental order. 

Let $\mathcal{AP}^{\mathcal D}_{T}$ be the ideal 
formed by decidable models of an AL theory $T$. 

\begin{thrm}[A.\,Gavryushkin and B.\,Khoussainov~\cite{gavruskin}]
Let $\mathcal L$ be a finite lattice and $\mathcal L'$ be its ideal. Then there exists an AL
theory $T$ such that:
\begin{enumerate}
\item The fundamental order of $T$ is $\mathcal L$, that is, $(\mathcal{AP}_{T}/\sim,\preceq)\cong\mathcal L$.
\item The spectra of decibel models of $T$ is $\mathcal L'$, that is, $\mathcal{AP}^{\mathcal D}_{T}\cong\mathcal L'$.
\end{enumerate}
\end{thrm}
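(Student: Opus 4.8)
The plan is to construct $T$ by hand, encoding the lattice $\mathcal{L}$ into the type structure of the theory and then using a computability-theoretic coding to control exactly which almost prime models admit a decidable presentation. I would attach to each element $a\in\mathcal{L}$ a complete type $p_a$ and arrange the axioms so that the prime model $A_{p_a}$ realises precisely the types $p_b$ with $b\le a$. Then the type spectrum of $A_{p_a}$ is the principal down-set of $a$, and since $A_{p_a}\preceq A_{p_b}$ holds exactly when $A_{p_b}$ realises $p_a$, i.e.\ exactly when $a\le b$, the map $a\mapsto[A_{p_a}]$ becomes an isomorphism of $\mathcal{L}$ onto $(\mathcal{AP}_{T}/\!\sim,\preceq)$. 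The saturated model, realising every $p_a$, sits above all of them.

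To realise the full lattice and not merely the underlying poset, I would build the language with a predicate marking each element together with ``attachment'' relations witnessing comparabilities: the axioms force that whenever an element of type $p_a$ is present and $b\le a$, a definably attached witness of type $p_b$ exists, and, crucially, that the simultaneous presence of witnesses for $p_a$ and $p_b$ forces a witness for the join $p_{a\vee b}$. The meet is then automatic, since the intersection of two principal down-sets is the down-set of the meet. To guarantee that $T$ is AL I would design the construction so that over each $p_a$ the principal types are uniformly listed, so that the only countable models are the $A_{p_b}$, their limits, and the saturated model; the function $\lambda_{T}$ is read off directly from the coding, and every non-saturated model is almost prime or limit by construction. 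Since $\mathcal{L}$ is finite, only finitely many ``key'' types need to be managed, which keeps this step combinatorial rather than analytic.

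For the decidability pattern I would exploit Theorem~\ref{thrmIdeal}: decidable almost prime models always form an ideal, so it suffices to realise the prescribed ideal $\mathcal{L}'$. The mechanism is the observation that every type realised in a decidable model is computable. Hence for $a\notin\mathcal{L}'$ it is enough to make $p_a$ a \emph{non-computable} complete type: since $A_{p_a}$ realises $p_a$, no decidable presentation of $A_{p_a}$ can exist. For $a\in\mathcal{L}'$ I would keep $p_a$ computable and realise it, together with the omission of every $p_c$ with $c\not\le a$, in a decidable model via Millar's omitting-types theorem~\cite{Millar}; Lemma~\ref{APdecidable} then upgrades this to decidability of $A_{p_a}$ itself. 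Because $\mathcal{L}'$ is downward closed, the spectrum of any $A_{p_b}$ with $b\in\mathcal{L}'$ consists entirely of computable types, and because $\mathcal{L}'$ is closed under joins within itself, the join of two decidable almost prime models again lands in $\mathcal{L}'$, so no inconsistency with the ideal theorem can arise.

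The hard part will be performing the two constructions simultaneously: the coding that makes $p_a$ non-computable for $a\notin\mathcal{L}'$ necessarily enriches the theory, and one must verify that it introduces neither a spurious almost prime model nor a spurious limit model that would distort the fundamental order or violate the AL property. I expect this to require a priority (finite-injury) argument interleaving four kinds of requirements: keeping $T$ decidable, keeping each $p_b$ with $b\in\mathcal{L}'$ computable and decidably realisable, diagonalising every candidate decidable presentation of $A_{p_a}$ for $a\notin\mathcal{L}'$ against a fixed non-computable set coded into $p_a$, and preserving the forcing relations among the $p_a$ that encode $\le$ and $\vee$. The crux is to check that the structure produced in the limit is genuinely AL with the intended $\lambda_{T}$, rather than an arbitrary small theory, so that Theorem~\ref{thrmIdeal} applies and pins the decidable spectrum to exactly $\mathcal{L}'$.
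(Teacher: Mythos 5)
Note first that the paper you are trying to match contains no proof of this statement: it is quoted from the companion paper \cite{gavruskin}, so the only fair comparison is against the general shape of that cited construction. Measured that way, your architecture is the right one and, in outline, the intended one: attach a complete type $p_a$ to each $a\in\mathcal L$ so that the spectrum of $A_{p_a}$ is the principal down-set of $a$; force joins so that a tuple realising $p_a$ and $p_b$ has type equivalent to $p_{a\vee b}$ (you are right that this is crucial --- without it a model realising $p_a$ and $p_b$ but omitting $p_{a\vee b}$ would have a non-principal spectrum, hence be neither almost prime nor limit, destroying both the AL property and the isomorphism with $\mathcal L$); kill decidability off the ideal by making $p_a$ non-computable, using the fact that a decidable model realises only computable types; and secure decidability on the ideal via Millar's effective omitting types theorem \cite{Millar} followed by Lemma~\ref{APdecidable}. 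Two simplifications you missed but could have used: since $\mathcal L$ is finite, every ideal $\mathcal L'$ is principal, so it suffices to produce one decidable model realising $p_m$ for the generator $m$ of $\mathcal L'$ and let the downward closure of Theorem~\ref{thrmIdeal} do the rest; dually, $\mathcal L\setminus\mathcal L'$ is an up-set, so non-computable types are needed only at its minimal elements, undecidability propagating upwards automatically because $A_{p_b}$ realises $p_a$ whenever $a\le b$.

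The genuine gap is the one you yourself flag and then leave unexecuted: the existence of a theory realising this pattern is precisely the content of the theorem, and your proposal asserts rather than performs the construction. Three obligations are nontrivial and interact. First, the join-forcing must not create new $\sim$-classes of almost prime models (e.g.\ types of tuples of witnesses must collapse to the designated $p_c$'s), or the fundamental order will properly extend $\mathcal L$. Second, you must exhibit a decidable theory with a designated non-computable complete type; this is possible (a non-isolated path through the computable tree of formulas can be non-computable even when the type space is countable), but it is a construction, not an observation, and it must coexist with smallness and with the uniform listing of principal types that your AL verification needs. Third, your use of Lemma~\ref{APdecidable} to upgrade the Millar model $B$ to decidability of $A_{p_a}$ itself tacitly assumes that almost prime models of $T$ with equal type spectra are isomorphic; this holds only if the construction ensures each $\sim$-class contains a single isomorphism type (otherwise, by Proposition~1, limit models intervene and the lemma yields \emph{some} decidable almost prime model in the class, which is what the spectrum statement needs --- but you should say which reading of $\mathcal{AP}^{\mathcal D}_{T}$ you are proving). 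Your closing paragraph correctly identifies that a priority construction verifying AL-ness in the limit is where the real work lies; as submitted, the proposal is a sound reduction of the theorem to that construction, not a proof of it.
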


Computable models behave rather differently than decidable ones. I conclude with
a theorem that shows that computable models do not form an ideal in the class of all
models of an AL theory. 

\begin{thrm}[A.\,Gavryushkin and B.\,Khoussainov~\cite{gavruskin}]
For every finite lattice $\mathcal L$, there exists a theory $T$ of finite signature with countably many models such that:
\begin{enumerate}
\item The fundamental order of $T$ without the least element is isomorphic to $\mathcal L$.
\item For all $p\in\mathcal L$, the class of models corresponding to $p$ contains infinitely many models of which exactly one is computable.
\end{enumerate}
\end{thrm}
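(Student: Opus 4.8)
The plan is to obtain $T$ from a single finite-signature coding that realizes the lattice on the type level and simultaneously forces the required computability pattern on the limit models. First I would fix, for each $p\in\mathcal L$, a complete type $\pi_p$ and arrange, using a finite signature of unary and binary predicates (a graph, as in~\cite{gavruskin}), that a countable model realizes $\pi_q$ precisely when it realizes $\pi_p$ for every $p\preceq q$; concretely the $\pi_p$-prime model $A_{\pi_p}$ is built to realize exactly the types $\pi_q$ with $q\preceq p$. Then $A_{\pi_p}\preceq A_{\pi_q}$ holds iff $p\preceq q$, so $(\mathcal{AP}_T/\!\sim,\preceq)$ is $\mathcal L$ together with one extra least element, the global prime model realizing none of the $\pi_p$; deleting it gives $\mathcal L$, which is condition~(1).

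For condition~(2) I first note that the class of models corresponding to $p$ is $\{A_{\pi_p}\}$ together with the $\pi_p$-limit models $\lambda_T(A_{\pi_p})$, and that by the spectrum argument of Lemma~\ref{APdecidable} all of these share one and the same type spectrum, namely that of $A_{\pi_p}$. To make the fiber infinite I would equip the coding with an $\omega$-indexed family of definable ``blocks'' so that a limit model is determined by the order type in which these blocks are arranged as the defining elementary chain climbs toward its union; distinct arrangements yield pairwise non-isomorphic $\pi_p$-limit models, of which there are infinitely many.

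The computability analysis is the heart of the argument, and it must respect the fact just noted: since every model in the fiber realizes the same types, non-computability cannot be read off from which types occur but only from how they are arranged. For the \emph{computable} member I would build $A_{\pi_p}$ effectively---its realized types are uniformly computable, so the effective prime-model construction used in Lemma~\ref{APdecidable} produces a computable presentation. To rule out a computable presentation of \emph{any} limit model I would fix a non-computable c.e.\ set $W$ and code it into the block arrangement itself---for instance so that the successor pattern of the blocks in any presentation of a limit model uniformly computes $W$---whence $W\le_T M$ for each limit $M$ and no such $M$ is computable, while the same types are arranged computably in $A_{\pi_p}$. This yields exactly one computable model per fiber, which is condition~(2); the resulting placement of a computable model at every node of $\mathcal L$ side by side with non-computable limits at the same node is precisely what prevents the computable models from forming an ideal, in contrast with Theorem~\ref{thrmIdeal}.

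The step I expect to be the main obstacle is the simultaneous satisfaction of all three demands inside one finite signature. The block family that supplies both the infinitude of the fiber and the reduction $W\le_T M$ must be \emph{invisible to the type spectrum}, so that it neither adds spurious almost prime models nor alters the embeddability relations that pin down $\mathcal L$, and the theory must remain AL so that every non-saturated model is genuinely almost prime or limit. Hardest of all is making the coding \emph{presentation-independent}: I must ensure that $W$ is computable from every copy of every limit model rather than from one convenient copy, while $A_{\pi_p}$ escapes the coding entirely---it is this uniform unavoidability of $W$ in the limits, against its total absence from the prime model sharing their spectrum, that carries the whole construction.
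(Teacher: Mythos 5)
You should note at the outset that the paper contains no proof of this statement at all: the theorem is imported from~\cite{gavruskin} with a citation, so there is no in-paper argument to compare against, and your proposal must stand on its own. Judged that way, the first half of your outline is sound and matches what one would expect: realizing the lattice via types $\pi_p$ whose realization pattern mirrors $\preceq$, with the global prime model supplying the deleted least element, is the standard route to condition~(1); and your observation that every $p$-limit model realizes exactly the types of $A_{\pi_p}$ (each tuple of the union lies in some member of the elementary chain) is correct and correctly identified as the reason the non-computability must be carried by the \emph{arrangement} rather than the type spectrum.

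The gap is that the part you yourself call ``the heart of the argument'' is named but never performed, and your concrete suggestion runs into an obstruction you do not address: the countability of the model spectrum. If the ``block arrangements'' of limit models are flexible enough that the successor pattern in \emph{any} presentation of \emph{any} limit model computes a fixed noncomputable c.e.\ set $W$, then $W$ must be an invariant of each limit isomorphism type; but codings with that much freedom typically admit $2^{\aleph_0}$ pairwise non-isomorphic variations of the invariant, which would destroy smallness, add spurious almost prime models, and wreck condition~(1). You flag ``presentation-independence'' as hard, but what is actually required is a degree-spectrum argument (the spectrum of every limit model lies in the cone above $\deg(W)$) \emph{together} with a verification that the theory still has only countably many models and remains AL --- and none of this is sketched, even in outline. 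A second, smaller defect: you justify the computable member of each fiber by appeal to Lemma~\ref{APdecidable}, but that lemma is about \emph{decidable} models built from uniformly computable type enumerations, whereas the theorem at hand concerns computable (atomic-diagram) presentations of a theory which, in your construction, carries the coding of $W$ and so need not be decidable; you cannot invoke that machinery off the shelf and instead need a direct effective construction of one computable copy per node. As it stands the proposal is a plausible plan whose two essential steps --- exactly one computable isomorphism type per fiber, and infinitude of the fiber without blowing up the spectrum --- are precisely the ones left open, so it does not constitute a proof.
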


\end{document}